\newtheorem{theorem}{Theorem}
\newtheorem{conjecture}{Conjecture}
\newtheorem{lemma}{Lemma}
\begin{document}
\author{Davit Baramidze}
\title[partial sums ]{On some weighted maximal operators of partial sums of Walsh-Fourier series in the space $H_1$}

\address{D. Baramidze, The University of Georgia, School of science and technology, 77a Merab Kostava St, Tbilisi 0128, Georgia and Department of Computer Science and Computational Engineering, UiT - The Arctic University of Norway, P.O. Box 385, N-8505, Narvik, Norway.}
\email{ datobaramidze20@gmail.com \ \ \ davit.baramidze@ug.edu.ge   }

\thanks{The research was supported by Shota Rustaveli National Science Foundation grant no. PHDF-21-1702.}
\maketitle

\begin{abstract}
In the first part of this paper we describe the status of the art of this subject. In the second part we present and motivate some new results. Indeed,
we  introduce some new weighted  maximal operators of the partial sums of the Walsh-Fourier series. We prove that for some "optimal" weights these new operators indeed are bounded  from the martingale Hardy space $H_{1}(G)$ to the space $\text{weak}-L_{1}(G),$ but is not bounded from $H_{1}(G)$ to the space $L_{1}(G).$

\end{abstract}

\date{}

\textbf{2010 Mathematics Subject Classification.} 42C10.

\textbf{Key words and phrases:} Walsh-Fourier series, partial sums, martingale Hardy space, maximal operators, weighted maximal operators.

\section{Introduction and status of the art}

It is well-known that  the Walsh system does
not form a basis in the space $L_1$ (see e.g. \cite{Bary}). Moreover, there exists a function in the dyadic Hardy space $H_{1}(G),$ such that the partial sums of $f$ are not bounded in the  $L_{1}$-norm.
Uniform and pointwise convergence and some approximation properties of
partial sums in $L_{1}(G)$ norms were investigated by  Avdispahi\'c and Memi\'c \cite{am},  G\'at, Goginava and Tkebuchava \cite{ggt,gt}, Nagy \cite{na1,NT1}, Onneweer \cite{Onn10} , Paley \cite{Pal11}, and Persson, Schipp,  Tephnadze and Weisz \cite{PSTW}. Fine \cite{fi1} obtained sufficient conditions for the uniform convergence
which are completely analogous to the Dini-Lipschits conditions. Gulicev \cite{Gul1} estimated the rate of uniform convergence of a Walsh-Fourier series by using Lebesgue constants and modulus of continuity. These problems for
Vilenkin groups were investigated by Blatota, Nagy, Persson and Tephnadze \cite{BNPT} (see also \cite{Blahota1,BPST2,BNT9,BNT10,PTW2}), Fridli \cite{Fr1}, G\'at  \cite{Gat1}, Simon \cite{Simon1} and Tephnadze \cite{MST,tep5,tep8}. 

In the study of convergence of subsequences of  partial sums and their restricted  maximal operators on the martingale Hardy spaces $H_p$ for $0<p\leq 1,$ the central role is played by the fact that any natural number $n\in \mathbb{N}$ can be uniquely expressed as
$
n=\sum_{k=0}^{\infty }n_{j}2^{j},  \ \ n_{j}\in Z_{2} ~(j\in \mathbb{N}), 
$
where only a finite numbers of $n_{j}$ differ from zero
and their important characters  $\left[ n\right],$ $\left\vert n\right\vert,$ $\rho\left( n\right)$  and $V(n)$ are defined by
\begin{equation*}
\left[ n\right] :=\min \{j\in \mathbb{N},n_{j}\neq 0\}, \ \ \  \ \
\left\vert n\right\vert :=\max \{j\in \mathbb{N},n_{j}\neq 0\}, \ \ \ \ \
\rho\left( n\right) =\left\vert n\right\vert -\left[ n\right]
\end{equation*}
and
\begin{equation*}
V\left( n\right): =n_{0}+\overset{\infty }{\underset{k=1}{\sum }}\left|
n_{k}-n_{k-1}\right|, \text{ \ for \
	all \ \ }n\in \mathbb{N}.
\end{equation*}
Moreover, every $n\in \mathbb{N}$ can be also represented as 
$n=\sum_{i=1}^{r}2^{n_{i}},n_{1}>n_{2}>...n_{r}\geq 0$ and for any $\{n_{s_j}\}, j=1,2,\ldots,r,$ satisfying
$
2^{s}\le {{n}_{{{s}_{1}}}}\le {{n}_{{{s}_{2}}}}\le ...\le {{n}_{{{s}_{r}}}}< {2^{s+1}}, \ s\in \mathbb{N},  
$
we  define numbers
\begin{equation}\label{rho}
s_{-} :=\min \{\left[n_{s_j}\right]\}, \ \ \
s_{+} :=\max \{\left\vert n_{s_j}\right\vert\}=s, \ \ \ 
\rho_s\left( n_{s_j}\right) :=s_{+}-s_{-}.
\end{equation}

In particular, (see \cite{BPT1}, \cite{luk} and  \cite{sws}) 
\begin{equation*}
V\left( n\right)/8\leq \Vert D_n\Vert_1\leq V\left( n\right)
\end{equation*}
Hence, for any $F\in L_1,$ there exists an absolute constant $c$ such that 
\begin{equation*}
\left\Vert S_n F\right\Vert_1\leq c{V\left( n\right)
}\left\Vert F\right\Vert_1.
\end{equation*}
Moreover, for any $f\in H_1$  (see \cite{tep2})
\begin{equation*}
\left\Vert S_{n}F\right\Vert _{H_{1}}\leq c{V\left( n\right)}\left\Vert F\right\Vert _{H_{1}}.
\end{equation*}

For $0<p<1$ in  \cite{tep0,tep1}  the weighted  maximal operator $\overset{\sim }{S }^{*,p},$ defined by
\begin{equation}\label{max0}
\overset{\sim }{S }^{*,p}F:=\sup_{n\in\mathbb{N}}\frac{\left|S _{n}F\right|} {\left( n+1\right)^{1/p-1} }
\end{equation}
was investigated and it was proved that the following inequalities hold:
\begin{equation*}
\left\Vert \overset{\sim }{S }^{*}F\right\Vert_{p}\leq c_{p}\left\Vert F\right\Vert _{H_{p}}
\end{equation*}
Moreover, it was also proved that the rate of the sequence $\{\left(n+1\right)^{ 1/p-1}\}$ given in the denominator of \eqref{max0} can not be improved.

In \cite{tep2} and \cite{tep3}  it was proved that if $F\in H_{p},$ then there exists an absolute constant  $c_{p},$ depending only on $p,$ such that
\begin{equation*}
	\text{ }\left\Vert S_{n}F\right\Vert _{H_{p}}\leq c_{p}2^{\rho\left( n\right)\left( 1/p-1\right) }\left\Vert F\right\Vert _{H_{p}}.
\end{equation*}	
In \cite{BPST} it was proved that the maximal operator  
\begin{equation}\label{1010a} 
\sup_{n\in \mathbb{N}}\frac{\left\vert S_{n}F\right\vert}{2^{\rho \left(n\right) \left( 1/p-1\right)}}
\end{equation}
is bounded from $H_p$ to $\text{weak}-L_p.$ 
Moreover, if $0<p<1,$ $\left\{ n_{k}:\text{ }k\geq 0\right\} $ is any increasing sequence of positive integers such that 
$
\sup_{k\in \mathbb{N}}\rho\left( n_{k}\right) =\infty  
$
and  
$\Phi :\mathbb{N}_{+}\rightarrow \mathbb{R}$ 
is any nondecreasing function, satisfying the condition 
\begin{equation*}	
\overline{\underset{k\rightarrow \infty }{\lim }}\frac{2^{\rho\left(
n_{k}\right) \left( 1/p-1\right) }}{\Phi \left( n_{k}\right) }=\infty,
\end{equation*}
then there exists a martingale $F\in H_{p}(G),$ such that
\begin{equation*}
\underset{k\in \mathbb{N}}{\sup }\left\Vert \frac{S_{n_{k}}F}{\Phi \left(n_{k}\right) }\right\Vert_{\text{weak}-L_p}=\infty .
	\end{equation*}

In \cite{BPST} it was also proved that the weighted maximal operator \eqref{1010a}  is not bounded from $H_p(G)$ to the Lebesgue space $L_p(G),$ for $0<p<1.$

In \cite{Bara3} it was proved that if  $0<p<1,$ $f\in {{H}_{p}},$ $\left\{ n_{k}:k\geq 0\right\} $ is a sequence of positive integers and 
	$\left\{ n_{s_i}:\ 1\leq i\leq r\right\} \subset \left\{ n_{k}:\ k\geq 0\right\} $ are integers such that
	$
	2^{s}\le {{n}_{{{s}_{1}}}}\le {{n}_{{{s}_{2}}}}\le ...\le {{n}_{{{s}_{r}}}}\le {2^{s+1}}, \  s\in \mathbb{N}, $
	then the weighted maximal operator $\widetilde{S }^{\ast ,\nabla },$ defined by
	\begin{equation*}
	\widetilde{S }^{\ast ,\nabla }F:=\sup_{s\in\mathbb{N}}\sup_{2^s\leq n_{s_i}< 2^{s+1}}
	\frac{\left\vert S _{n}F\right\vert}{2^{\rho_s\left( n_{s_i}\right) \left( 1/p-1\right)}},
	\end{equation*}
	where  $\rho_s\left( n_{s_i}\right)$ are defined by \eqref{rho},
	is bounded from the Hardy space ${{H}_{p}}$ to the Lebesgue space ${{L}_{p}}$. Moreover, if $0<p<1,$ $\left\{ n_{k}:k\geq 0\right\} $ is a sequence of positive numbers and 
	$\left\{ n_{s_i}:\ 1\leq i\leq r\right\} \subset \left\{ n_{k}:\ k\geq 0\right\} $ is a subsequence satisfying the condition
	$
	2^{s}\le {{n}_{{{s}_{1}}}}\le {{n}_{{{s}_{2}}}}\le ...\le {{n}_{{{s}_{r}}}}\le {2^{s+1}}, \  s\in \mathbb{N}, $
	then, for any nonnegative, nondecreasing function
	$\varphi:\mathbb{N}_+\to \mathbb{R}$  satisfying condition 
	\begin{equation*}
	\sup_{s\in\mathbb{N}}\sup_{2^s\leq n_{s_i}< 2^{s+1}}\frac{2^{\rho_s\left( n_{s_i}\right) \left( 1/p-1\right)}}{\varphi(n_{s_i})}=\infty,
	\end{equation*}
	the maximal operator, defined by
	\begin{equation*}
	\sup_{s\in\mathbb{N}}\sup_{2^s\leq n_{s_i}< 2^{s+1}}
	\frac{\left\vert S _{n}F\right\vert}{\varphi\left( n_{s_i}\right)},
	\end{equation*}
is not bounded from the Hardy space ${{H}_{p}}$ to the Lebesgue space ${{L}_{p}}$.

In \cite{BPT} it was proved that if $0<p<1$, $f\in {{H}_{p}}\left(G \right)$ and $\varphi:\mathbb{N}_+\to \mathbb{R}_+$ be any nonnegative and nondecreasing function satisfying the condition
	\begin{equation*}
	\sum_{n=1}^{\infty}\frac{1}{\varphi^p(n)}<c<\infty,
	\end{equation*}
	then, for any sequence  $\left\{ n_{k}:k\geq 0\right\} $  of positive integers, the weighted maximal operator $\widetilde{S }^{\ast ,\nabla },$ defined by
	\begin{equation}\label{maxoperator1}
	\widetilde{S }^{\ast ,\nabla }F=\underset{k\in \mathbb{N}}{\sup }
	\frac{\left\vert S_{n_k}F\right\vert}{2^{\rho\left( n_k\right) \left( 1/p-1\right)}\varphi(\rho\left( n_k\right))},
	\end{equation}
	is bounded from the Hardy space ${{H}_{p}}$ to the Lebesgue space ${{L}_{p}}$.
Moreover, for any $0<p<1$ and any sequence $\left\{ n_{k}:k\geq 0\right\} $ of positive numbers and
	$\varphi:\mathbb{N}_+\to \mathbb{R}_+$ be any nonnegative and nondecreasing function satisfying the condition 
	\begin{equation*}
	\sum_{n=1}^{\infty}\frac{1}{\varphi^p(n)}=\infty,
	\end{equation*}
	the weighted maximal operator $\widetilde{S }^{\ast ,\nabla },$ defined by \eqref{maxoperator1}, is not bounded from the Hardy space ${{H}_{p}}(G)$ to the Lebesgue space ${{L}_{p}}(G)$.
Hence, we get that if  $0<p<1$ and $f\in {{H}_{p}}\left(G \right)$, then the weighted maximal operator $\widetilde{S }^{\ast ,\nabla, \varepsilon },$ defined by
\begin{equation*}
\widetilde{S }^{\ast ,\nabla, \varepsilon }F:=\underset{n\in \mathbb{N}}{\sup }\frac{\left\vert S_{n}F\right\vert}{2^{\rho\left( n\right) \left( 1/p-1\right)}\left(\rho\left( n\right)\log^{1+\varepsilon}\rho\left( n\right)\right)^{1/p}}, \ \ \ \varepsilon>0,
\end{equation*}
is bounded from the Hardy space ${{H}_{p}}(G)$ to the Lebesgue space ${{L}_{p}}(G)$ for $\varepsilon>0$ and is not bounded from the Hardy space ${{H}_{p}}(G)$ to the Lebesgue space ${{L}_{p}}(G)$ for $\varepsilon=0.$

In this paper we prove that some new weighted  maximal operators of the partial sums of the Walsh-Fourier series with "optimal" weights  are bounded  from the martingale Hardy space $H_{1}(G)$ to the space $\text{weak}-L_{1}(G),$ but is not bounded from $H_{1}(G)$ to the space $L_{1}(G).$

\section{Preliminaries}

\bigskip Let $\mathbb{N}_{+}$ denote the set of the positive integers, $%
\mathbb{N}:=\mathbb{N}_{+}\cup \{0\}.$ Denote by $Z_{2}$ the discrete cyclic
group of order $2$, that is $Z_{2}:=\{0,1\},$ where the group operation is the modulo $2$ addition and every subset is open. The Haar measure on $Z_{2}$ is given so that the measure of a singleton is $1/2$.
Define the group $G$ as the complete direct product of the group $Z_{2},$
with the product of the discrete topologies of $Z_{2}$`s. The elements of $G$
are represented by sequences 
$x:=(x_{0},x_{1},...,x_{j},...), \ \text{ where } \ 
x_{k}=0\vee 1.$

It is easy to give a base for the neighborhood of $x\in G:$ 
\begin{equation*}
I_{0}\left( x\right) :=G,\text{ \ }I_{n}(x):=\{y\in
G:y_{0}=x_{0},...,y_{n-1}=x_{n-1}\}\text{ }(n\in \mathbb{N}).
\end{equation*}

Denote 
$I_{n}:=I_{n}\left( 0\right) , \ \  \overline{I_{n}}:=G\backslash 
I_{n}.$  
Then, it is easy to prove that
\begin{equation}\label{2}
\overline{I_{M}}=\overset{M-1}{\underset{s=0}{\bigcup }}I_{s}\backslash
I_{s+1}.  
\end{equation}

Let define the Walsh system $w:=(w_{n}:n\in \mathbb{N})$ on $G$ by
\begin{equation*}
w_{n}(x):=\overset{\infty }{\underset{k=0}{\Pi }}r_{k}^{n_{k}}\left(
x\right) = \left( -1\right) ^{%
\underset{k=0}{\overset{\left\vert n\right\vert }{\sum }}n_{k}x_{k}}\text{%
\qquad }\left( n\in \mathbb{N}\right) .
\end{equation*}

The Walsh system is orthonormal and complete in $L_{2}(G) $ (see
e.g.  \cite{tepbook} and \cite{sws}).

If $f\in L_{1} $ we define Fourier coefficients, partial sums of the Fourier series, Dirichlet kernels with respect
to the Walsh system by
\begin{eqnarray*}
\widehat{f}\left(k\right):=\int_{G}fw_{k}d\mu \ \left( k\in \mathbb{N}\right), \ \ 
S_{n}f:=\sum_{k=0}^{n-1}\widehat{f}\left( k\right) w_{k}, \ \   
D_{n}:=\sum_{k=0}^{n-1}w_{k} \ \left( n\in \mathbb{N}_{+}\right). \notag
\end{eqnarray*}

Recall that (see \cite{gol}, \cite{tepbook} and \cite{sws})
\begin{equation}\label{1dn}
D_{2^{n}}\left( x\right) =\left\{ 
\begin{array}{ll}
2^{n}, & \,\text{if\thinspace \thinspace \thinspace }x\in I_{n}, \\ 
0, & \text{if}\,\, \ x\notin I_{n},
\end{array}
\right.  
\end{equation}
\begin{equation}\label{2dn}
D_{n}=w_{n}\overset{\infty }{\underset{k=0}{\sum }}n_{k}r_{k}D_{2^{k}}=w_{n}%
\overset{\infty }{\underset{k=0}{\sum }}n_{k}\left(
D_{2^{k+1}}-D_{2^{k}}\right) ,\text{ for \ }n=\overset{\infty }{\underset{i=0%
	}{\sum }}n_{i}2^{i}.  
\end{equation}

The $\sigma $-algebra generated by the intervals $\left\{ I_{n}\left(
x\right) :x\in G\right\} $ will be denoted by $\zeta _{n}\left( n\in \mathbb{N}\right).$ 
Denote by $F=\left( F_{n},n\in \mathbb{N}\right) $
martingale with respect to $\digamma _{n}$ $\left( n\in \mathbb{N}\right) $
(see e.g. \cite{We1,We3}).
The maximal function $F^{\ast }$ of a martingale $F$ is defined by
$
F^{\ast }:=\sup_{n\in \mathbb{N}}\left\vert F_{n}\right\vert .
$
For $0<p<\infty $ the Hardy martingale spaces $H_{p}\left( G\right) $
consists of all martingales for which
\begin{equation*}
\left\Vert F\right\Vert _{H_{p}}:=\left\Vert F^{\ast }\right\Vert
_{p}<\infty .
\end{equation*}

For every martingale $F=\left( F_{n},n\in \mathbb{N}
\right) $ and every $k\in \mathbb{N}$ the limit
\begin{equation*}
\widehat{F}\left( k\right) :=\lim_{n\rightarrow \infty }\int_{G}F_{n}\left(
x\right) w_{k}\left( x\right) d\mu \left( x\right)  
\end{equation*}
exists and it is called the $k$-th Walsh-Fourier coefficients of $F.$

A bounded measurable function $a$ is called $p$-atom, if there exists a dyadic
interval $I,$ such that 
\begin{equation*}
\int_{I}ad\mu =0,\text{ \ \ }\left\Vert a\right\Vert _{\infty }\leq \mu
\left( I\right) ^{-1/p},\text{ \ \ supp}\left( a\right) \subset I.
\end{equation*}

The dyadic Hardy martingale spaces $H_{p}(G)$ for $0<p\leq 1$ have an atomic
characterization. Namely, the following holds (see \cite{tepbook}, \cite{We1,We3}):

\begin{lemma}\label{W1}
	A martingale $F=\left( F_{n},n\in \mathbb{N}\right) $ belongs to $H_{p}(G) \ \left( 0<p\leq 1\right) $ if and only if there exists a sequence $%
	\left( a_{k},\text{ }k\in \mathbb{N}\right) $ of p-atoms and a sequence $%
	\left( \mu _{k},k\in \mathbb{N}\right) $ of  real numbers such that for
	every $n\in \mathbb{N},$
	\begin{equation}
	\qquad \sum_{k=0}^{\infty }\mu _{k}S_{2^{n}}a_{k}=F_{n},\text{ \ \ \ }%
	\sum_{k=0}^{\infty }\left\vert \mu _{k}\right\vert ^{p}<\infty ,  \label{2A}
	\end{equation}
	Moreover, 	
	$
	\left\Vert F\right\Vert _{H_{p}}\backsim \inf \left( \sum_{k=0}^{\infty
	}\left\vert \mu _{k}\right\vert ^{p}\right) ^{1/p},
	$
	where the infimum is taken over all decomposition of $F$ of the form (\ref%
	{2A}).
\end{lemma}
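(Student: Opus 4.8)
\emph{Proof sketch.}
The statement is the classical atomic characterisation of $H_{p}$, and the plan is to prove the two implications separately and read off the norm equivalence at the end. For the easy implication (a representation as in \eqref{2A} forces $F\in H_{p}$ with the corresponding bound), the idea is to estimate the maximal function of a single atom. If $a$ is a $p$-atom with $\operatorname{supp}a\subset I=I_{N}(u)$, then, using that by \eqref{1dn} the Walsh--Dirichlet kernel $D_{2^{n}}$ equals $2^{n}\mathbf 1_{I_{n}}$, the partial sum $S_{2^{n}}a(x)=2^{n}\int_{I_{n}(x)}a\,d\mu$ is simply the average of $a$ over the dyadic interval $I_{n}(x)$. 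Hence $S_{2^{n}}a=0$ for $n\le N$ (the only nontrivial intervals $I_{n}(x)$ contain $I$, on which $a$ has mean zero), while for $n>N$ the function $S_{2^{n}}a$ is supported in $I$ and $|S_{2^{n}}a|\le\|a\|_{\infty}\le\mu(I)^{-1/p}$. Thus $a^{*}:=\sup_{n}|S_{2^{n}}a|\le\mu(I)^{-1/p}\mathbf 1_{I}$, so $\|a^{*}\|_{p}\le1$. From $F_{n}=\sum_{k}\mu_{k}S_{2^{n}}a_{k}$ one gets $F^{*}\le\sum_{k}|\mu_{k}|a_{k}^{*}$, and since $0<p\le1$,
\begin{equation*}
\|F\|_{H_{p}}^{p}=\|F^{*}\|_{p}^{p}\le\sum_{k}|\mu_{k}|^{p}\|a_{k}^{*}\|_{p}^{p}\le\sum_{k}|\mu_{k}|^{p},
\end{equation*}
and taking the infimum over all representations gives one half of the equivalence.

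For the converse ($F\in H_{p}$ admits such a decomposition) the plan is the classical stopping-time construction. For $k\in\mathbb Z$ set $\nu_{k}:=\inf\{n\in\mathbb N:|F_{n}|>2^{k}\}$ (with $\inf\emptyset=\infty$); these are $(\zeta_{n})$-stopping times, $\{\nu_{k}<\infty\}=\{F^{*}>2^{k}\}$, and $\mu(\{F^{*}>2^{k}\})\to0$ as $k\to\infty$ because $F^{*}\in L_{p}$. With the stopped martingales $F^{\nu_{k}}_{n}:=F_{n\wedge\nu_{k}}$ one has, for each fixed $n$, the telescoping identity $F_{n}=\sum_{k\in\mathbb Z}\bigl(F^{\nu_{k+1}}_{n}-F^{\nu_{k}}_{n}\bigr)$ (the contribution of $\widehat F(0)$ together with the "first jump" of $F$ is itself of atomic type and is treated separately). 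Next I would write each dyadic‑open set $\{F^{*}>2^{k}\}$ as the disjoint union of its maximal dyadic subintervals $I_{k}^{i}$ and put $g_{k}^{i}:=(F^{\nu_{k+1}}-F^{\nu_{k}})\mathbf 1_{I_{k}^{i}}$; restricting the martingale $F^{\nu_{k+1}}-F^{\nu_{k}}$ to an already resolved dyadic interval again yields a (bounded) martingale whose partial sums $S_{2^{n}}$ recover it, and $F^{\nu_{k+1}}-F^{\nu_{k}}=\sum_{i}g_{k}^{i}$.

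The claim is then that $a_{k}^{i}:=(\mu_{k}^{i})^{-1}g_{k}^{i}$, with $\mu_{k}^{i}:=c\,2^{k}\mu(I_{k}^{i})^{1/p}$ for an appropriate absolute constant $c$, is a $p$-atom. It is supported in $I_{k}^{i}$ by construction; its integral over $I_{k}^{i}$ vanishes, since the stopped difference $F^{\nu_{k+1}}-F^{\nu_{k}}$ is still $0$ at the rank where $\nu_{k}$ first exceeds $2^{k}$ on $I_{k}^{i}$, so passing to the conditional expectation onto that rank kills $\int_{I_{k}^{i}}g_{k}^{i}$; and the essential point is the uniform bound $\|g_{k}^{i}\|_{\infty}\le c\,2^{k}$. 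Granting this, $\sum_{k,i}|\mu_{k}^{i}|^{p}=c^{p}\sum_{k}2^{kp}\sum_{i}\mu(I_{k}^{i})=c^{p}\sum_{k}2^{kp}\mu(\{F^{*}>2^{k}\})\backsim\|F^{*}\|_{p}^{p}=\|F\|_{H_{p}}^{p}$ by the standard comparison of a $p$-th power integral with its dyadic layer sum, which yields the second half of the equivalence; combined with the first paragraph this also proves $\|F\|_{H_{p}}\backsim\inf(\sum_{k}|\mu_{k}|^{p})^{1/p}$.

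The main obstacle is exactly the bound $\|g_{k}^{i}\|_{\infty}\le c\,2^{k}$. Up to time $\nu_{k+1}$ the stopped martingale stays bounded by $2^{k+1}$ in modulus, so the difficulty is the possible "overshoot" of $F$ at the instant $\nu_{k+1}$; controlling it requires using the maximality of $I_{k}^{i}$ (so that $|F_{m}|\le2^{k}$ on the parent of $I_{k}^{i}$ for every $m$ below its rank) together with the two-point nature of the dyadic martingale increments. This is the one genuinely technical step, and it is carried out in detail in \cite{tepbook,We1,We3}; the remaining parts are bookkeeping.
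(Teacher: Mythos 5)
The paper does not prove Lemma \ref{W1} at all: it is quoted as a known result with references to \cite{tepbook} and \cite{We1,We3}, so there is no in-paper argument to compare yours against. What you have written is, in outline, precisely the standard proof from those references: the easy direction via $S_{2^n}a(x)=2^n\int_{I_n(x)}a\,d\mu$, the support/cancellation argument giving $a^*\le\mu(I)^{-1/p}\mathbf 1_I$ and hence $\Vert a^*\Vert_p\le 1$, the $p$-subadditivity step for $0<p\le 1$, and, for the converse, the Calder\'on--Zygmund-type stopping times $\nu_k$ with the telescoping of stopped martingales and the layer-cake identity $\sum_k 2^{kp}\mu(F^*>2^k)\backsim\Vert F^*\Vert_p^p$. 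That architecture is correct, and you correctly isolate the one genuinely hard point, namely the uniform bound $\Vert F^{\nu_{k+1}}-F^{\nu_k}\Vert_\infty\le c\,2^k$, which for signed dyadic martingales really does require the two-point (regularity) structure of the increments to control the overshoot at time $\nu_{k+1}$; deferring that estimate to \cite{We1,tepbook} is legitimate for a cited classical lemma, though it is the step a self-contained proof would have to carry out.

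Two small cautions. First, your parenthetical that the contribution of $\widehat F(0)$ is ``of atomic type'' glosses over a genuine convention issue: with the paper's definition every $p$-atom has mean zero, so any representation \eqref{2A} forces $F_0=\sum_k\mu_k S_1a_k=0$; the constant term must either be allowed as a degenerate atom or split off explicitly, and this affects the literal statement of the lemma, not just your proof. Second, in the telescoping identity $F_n=\sum_{k\in\mathbb Z}(F_n^{\nu_{k+1}}-F_n^{\nu_k})$ the limit as $k\to-\infty$ is exactly this constant/first-jump term, so the two issues are the same one and should be handled together rather than in passing. Neither point invalidates the sketch; both are resolved in the cited sources.
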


\section{The main result}

Our  main result reads:
\begin{theorem}\label{theorem111}
	a) 	Let $f\in {{H}_{1}}$. Then the weighted maximal operator $\widetilde{S }^{\ast ,\nabla },$ defined by 
\begin{equation}\label{snmax}
\widetilde{S }^{\ast ,\nabla }:=\underset{k\in \mathbb{N}}{\sup }
\frac{\left\vert S_{n_k}F\right\vert}{V( n_k)}
\end{equation}	
	 is bounded from the Hardy space ${{H}_{1}}$ to the space $\text{weak}-L_1.$
	 
   b)	Let $f\in {{H}_{1}}$. Then the weighted maximal operator $\widetilde{S }^{\ast ,\nabla },$ defined by \eqref{snmax}	
   is not bounded from the Hardy space ${{H}_{1}}$ to the space $L_1.$
\end{theorem}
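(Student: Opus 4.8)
For part a), the plan is to use the atomic characterization from Lemma \ref{W1}: it suffices to show that there is an absolute constant $c$ such that for every $1$-atom $a$ (supported on a dyadic interval $I=I_M$ with $\|a\|_\infty\le \mu(I)^{-1}$ and $\int_I a\,d\mu=0$) one has
\begin{equation*}
\mu\left\{x\in G:\ \widetilde S^{\ast,\nabla}a(x)>\lambda\right\}\le \frac{c}{\lambda}.
\end{equation*}
By the $2^M$-invariance of $I$ and the fact that $S_n a$ vanishes for $n\le 2^M$ on $\overline{I_M}$ (and in general $S_{2^M}a=0$ because $\widehat a(k)=0$ for $k<2^M$), the supremum over $I_M$ contributes at most $\mu(I_M)\le 1/\lambda$-type mass once $\lambda$ is chosen appropriately, so the real work is the estimate on $\overline{I_M}=\bigcup_{s=0}^{M-1}I_s\setminus I_{s+1}$, using the decomposition \eqref{2}. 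On $I_s\setminus I_{s+1}$ I would bound $|S_{n_k}a(x)|$ pointwise: writing $D_{n_k}$ via \eqref{2dn} and using $\|a\|_1\le 1$, one gets $|S_{n_k}a(x)|\le \|a\|_\infty \int_I |D_{n_k}(x\dot+ t)|\,d\mu(t)$, and the kernel $D_{n_k}$ restricted to cosets of $I_M$ splits into at most $V(n_k)$ blocks of the form $w_{n_k}(D_{2^{j+1}}-D_{2^j})$ whose $L_1$-size on the relevant coset is controlled; dividing by $V(n_k)$ kills exactly this factor, yielding a bound of the form $|\widetilde S^{\ast,\nabla}a(x)|\le c\,2^{M}\cdot 2^{s-M}=c\,2^s$ for $x\in I_s\setminus I_{s+1}$ (the standard Walsh partial-sum estimate, e.g. as in \cite{tep0,tep1,tep2}). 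Then
\begin{equation*}
\mu\left\{x\in \overline{I_M}:\ \widetilde S^{\ast,\nabla}a(x)>\lambda\right\}\le \sum_{\{s:\ c2^s>\lambda\}}\mu(I_s\setminus I_{s+1})\le \sum_{\{s:\ c2^s>\lambda\}}2^{-s}\le \frac{c}{\lambda},
\end{equation*}
which is the desired weak-type bound. The main obstacle here is making the kernel estimate on each coset rigorous and checking that the number of nontrivial blocks is genuinely $O(V(n_k))$ uniformly in $k$; this is where the definition of $V(n)$ and the inequality $\|D_n\|_1\le V(n)$ are essential, and one has to be careful that the division is by $V(n_k)$ and not by something smaller on a bad coset.

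For part b), the plan is the standard counterexample construction: exhibit a martingale $F\in H_1$ (built as a lacunary sum of carefully chosen atoms, or directly via Fourier coefficients supported on a sparse set such as $\{2^{2k}\}$) together with a sequence $\{n_k\}$ for which $\|\widetilde S^{\ast,\nabla}F\|_1=\infty$. Concretely, I would take $n_k$ of the form $n_k=2^{2N_k}$ (or more generally $n_k$ with $V(n_k)$ small — e.g. $V(n_k)$ bounded — so that the denominator in \eqref{snmax} does not help), or conversely choose $F$ concentrated so that for suitable $n_k$ the ratio $|S_{n_k}F|/V(n_k)$ has $L_1$-norm over $G$ that blows up; the mechanism is that $S_{n_k}F$ can be as large as $c\,V(n_k)\log(\text{something})$ on a set of measure comparable to $1$ while $\|F\|_{H_1}$ stays bounded, exploiting that the analogue of the inequality $\|S_nF\|_1\le cV(n)\|F\|_1$ is sharp and has no room to spare in the $L_1\to L_1$ (as opposed to $L_1\to \mathrm{weak}\text{-}L_1$) scale. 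I would organize it as: (i) define $F$ and verify $F\in H_1$ via Lemma \ref{W1} by checking the atomic decomposition has $\sum|\mu_k|<\infty$; (ii) for each $k$ identify the level set where $|S_{n_k}F|/V(n_k)$ is large, typically using the explicit form of $D_{n_k}$ on $\overline{I_{M_k}}$; (iii) sum the $L_1$ contributions to get divergence. The hard part of b) is the simultaneous control: one needs the $n_k$ and the atoms to be "resonant" so that each $n_k$ sees its own atom with the worst constant, while the overall martingale still has finite $H_1$ norm — this balancing, and the verification that the weak-$L_1$ bound from part a) is not accidentally enough to also give $L_1$, is the delicate point, and I expect to mimic the constructions of \cite{BPST} and \cite{BPT} adapted from the $0<p<1$ setting to $p=1$.
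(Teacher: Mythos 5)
Your part a) follows essentially the same route as the paper: reduce to atoms via Lemma \ref{W1} and the $L_\infty$-boundedness of $S_n/V(n)$ (which is exactly where the division by $V(n_k)\asymp\Vert D_{n_k}\Vert_1$ is used), then on $\overline{I_M}=\bigcup_{s=0}^{M-1}I_s\setminus I_{s+1}$ obtain the pointwise bound $\widetilde S^{\ast,\nabla}a(x)\le c2^s$ for $x\in I_s\setminus I_{s+1}$ from \eqref{1dn}--\eqref{2dn}, and sum $2^{-s}$ over the levels where $c2^s>\lambda$. This is correct and matches the paper; the only cosmetic difference is that you attribute the tail estimate to cancelling $V(n_k)$ blocks, whereas the bound $|D_n(x+t)|\le c2^s$ on $I_s\setminus I_{s+1}$ already holds uniformly in $n$ and the weight is really needed only for the $L_\infty$ part.

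Part b), however, is a plan rather than a proof, and the key construction is missing. What the paper actually does is take the sequence of functions $f_{n_k}=D_{2^{n_k+1}}-D_{2^{n_k}}$, which satisfy $\Vert f_{n_k}\Vert_{H_1}\le 1$, and test the maximal operator at the indices $n=2^{n_k}+2^s$, $s=0,\dots,n_k-1$. The two points that make this work, and that your sketch does not identify, are: (i) $V(2^{n_k}+2^s)$ is bounded by an absolute constant (these $n$ have only two binary blocks), so the weight in the denominator gives no help at these indices; and (ii) $|S_{2^{n_k}+2^s}f_{n_k}|\ge c2^s$ on $I_{s+1}(e_s)$, a set of measure $2^{-s-1}$, so each $s$ contributes a fixed amount $\asymp 1$ to the $L_1$-norm of the maximal function and the sum over $s$ gives $\gtrsim n_k$. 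This is precisely the borderline profile ``size $2^s$ on measure $2^{-s}$'' that lies in $\text{weak-}L_1$ but not in $L_1$; your stated mechanism (``$S_{n_k}F$ as large as $cV(n_k)\log(\cdot)$ on a set of measure comparable to $1$'') does not describe this and would not by itself contradict the weak-type bound of part a). Also note that no single martingale $F$ with $\Vert\widetilde S^{\ast,\nabla}F\Vert_1=\infty$ and no ``resonance'' between many atoms is needed: unboundedness follows from the sequence $f_{n_k}$ alone, since $\Vert\widetilde S^{\ast,\nabla}f_{n_k}\Vert_1/\Vert f_{n_k}\Vert_{H_1}\ge c\,n_k\to\infty$. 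Without the explicit choice of $f_{n_k}$, of the test indices $2^{n_k}+2^s$, and of the lower bound on $I_{s+1}(e_s)$, your part b) does not yet constitute a proof.
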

\begin{proof}
	Since $S _{n}/V(n)$ is bounded from $L_{\infty }$ to $%
	L_{\infty },$
	by Lemma \ref{W1}, the proof of Theorem \ref{theorem111}
	will be complete, if we prove that
	\begin{equation}\label{weaktypesigma5}
	t\mu\left\{x\in\overline{I_M}:\widetilde{S }^{\ast ,\nabla }a(x) \geq t\right\} \leq c_p<\infty,
	\text{ \ \ \ \ }t\geq 0
	\end{equation}
	for every $p$-atom $a.$ In this parer $c_p$ (or $C_p$) denotes a positive constant depending only on $p$ but which can be different in different places.

	We may assume that $a$ is an arbitrary $p$-atom,
	with support $I, \ \mu \left( I\right) =2^{-M}$ and $I=I_{M}.$ 
	It is easy to see that 
	$S _{n}a\left( x\right) =0, \  \text{ when } \  n< 2^{M}.$
	Therefore, we can suppose that $n\geq 2^{M}.$
	Since $\left\Vert a\right\Vert _{\infty }\leq 2^{M},$
	we obtain that 
	\begin{eqnarray*}
		\frac{\left\vert S _{n}a\left( x\right) \right\vert}{V(n)} 
		&\leq& \frac{1}{V(n)} 
		\left\Vert a\right\Vert _{\infty }\int_{I_{M}}\left\vert D_{n}\left(
		x+t\right) \right\vert \mu \left( t\right)
		\leq 2^{M}\int_{I_{M}}\left\vert
		D_{n}\left( x+t\right) \right\vert \mu \left( t\right) .
	\end{eqnarray*}

	Let $I_{s}\backslash I_{s+1} .$ 
	Then, it is easy to see that $x+t\in I_{s}\backslash I_{s+1}$ for $t\in I_M$ and if we again combine  \eqref{1dn} and \eqref{2dn} we find that 
	$D_{n}\left( x+t\right)\leq c2^s, \  \text{ for } \  t\in I_{M}$
	and 
	\begin{eqnarray}\label{12}
	\frac{\left\vert S_{n}a\left( x\right) \right\vert }{V(n)} 
	\leq c_p 2^{M}2^{s-M} \leq c_{p}2^{s}.
	\end{eqnarray}
	
	By applying (\ref{12})  for any  $x\in I_{s}\backslash I_{s+1}  ,\,0\leq s< M,$ we find that
	\begin{eqnarray}\label{weaktypesigma1}
	\widetilde{S }^{\ast,\nabla }a\left( x\right)=\sup_{n\in\mathbb{N}}\left(\frac{\left\vert S_{n}a\left( x\right) \right\vert}{2^{\rho\left( n\right) \left( 1/p-1\right)}} \right) \leq C_{p}2^{s}.
	\end{eqnarray}
	
	It immediately follows that for  $s\leq M$ we have the following estimate
	\begin{equation*}
	\widetilde{S}^{\ast ,\nabla }a\left( x\right) \leq C_p2^{M}\text{ \ \ for any\ \ }x\in I_{s}\backslash I_{s+1}, \ \ s=0,1,\cdots, M
	\end{equation*}%
	and also that
	\begin{equation}\label{weaktypesigma0}
	\mu \left\{ x\in I_{s}\backslash I_{s+1}:\widetilde{S }^{\ast ,\nabla }a\left( x\right)> C_p2^{k}\right\} =0, \ \ \ k=M, M+1,\ldots
	\end{equation}
	
	By combining \eqref{2} and \eqref{weaktypesigma1} we get that
	\begin{eqnarray*}
		\left\{ x\in \overline{I_{N}}:\widetilde{S }^{\ast ,\nabla }a\left( x\right)\geq C_p2^{k}\right\} 
		\subset \bigcup_{s=k}^{M-1}\left\{ x\in I_{s}\backslash
		I_{s+1}:\widetilde{S }^{\ast ,\nabla }a\left( x\right)\geq C_p 2^{k}\right\} 
	\end{eqnarray*}
	and
	\begin{eqnarray}\label{weaktypesigma4}
	\mu \left\{ x\in \overline{I_{M}}:\widetilde{S }^{\ast ,\nabla }a\left( x\right)\geq C_p 2^{k}\right\}\leq  \overset{M-1}{\underset{s=k}{\sum }}\frac{1}{2^{s}}\leq \frac{2}{2^{k}}.
	\end{eqnarray}
	In view of \eqref{weaktypesigma0} and \eqref{weaktypesigma4} we can conclude that
	\begin{eqnarray*}
		2^{k}\mu \left\{ x\in \overline{I_{N}}:\widetilde{S }^{\ast ,\nabla }a\left( x\right)\geq {C_p}{2^{k}}\right\}<c_p<\infty,
	\end{eqnarray*}
	which shows that \eqref{weaktypesigma5} holds and the proof of part a) is complete.
	
Set
\begin{equation*}
f_{n_{k}}\left( x\right) =D_{2^{n_{k}+1}}\left( x\right)
-D_{2^{{n_{k}}}}\left( x\right) ,\text{ \qquad }n_{k}\geq 3.
\end{equation*}

In \cite{BPST} it was proved that
$
\left\Vert f_{n_{k}}\right\Vert _{H_{p}} \leq 1
$
and
$
\left\vert S_{2^{n_k}+2^s}f_{n_k}\left( x\right) \right\vert
\geq c2^{s}, $ for $ x\in I_{s+1}\left(e_{s}\right), $ $ s=0,\cdots , n_k-1.
$
Hence,
\begin{eqnarray*}\label{22abc}
\int_{G}\sup_{n\in \mathbb{N}}\frac{\left\vert S_{n}f_{n_{k}} \right\vert }{V\left(n\right)}d\mu 
\geq \overset{n_{k}-1}{\underset{s=0}{\sum }}
\int_{I_{s+1}\left(e_{s}\right)} \frac{\left\vert S _{2^{n_k}+2^s}f_{n_{k}} \right\vert }{V\left(2^{n_k}+2^s\right)} d\mu
 \geq c_p\overset{n_{k}-1}{\underset{s=0}{\sum }}\frac{1}{2^{s}}2^{s}\geq C_p n_k .
\end{eqnarray*}
Finally, we get that
\begin{eqnarray*}
	\frac{\left( \int_{G}\left(\sup_{n\in \mathbb{N}}\frac{\left\vert S_{n}f_{n_{k}}\left( x\right) \right\vert }{V\left(n\right)}\right)d\mu \left( x\right) \right) ^{1/p}}{\left\| f_{n_{k}}\right\| _{H_p}} 
	\geq c_p n^{1/p}_k\rightarrow \infty,\quad \text{as \quad }k\rightarrow \infty,
\end{eqnarray*}

so also part b) is proved and the proof is complete.	
	
\end{proof}

\section{Final comments and open problem}
We finalize this paper with another natural conjecture related to Theorem \ref{theorem111}. For this we need some new characterization of $n\in\mathbb{N}.$

Let 
$
2^{s}\le {{n}_{{{s}_{1}}}}\le {{n}_{{{s}_{2}}}}\le ...\le {{n}_{{{s}_{r}}}}\le {2^{s+1}}, \ s\in \mathbb{N}. $
For such $n_{s_j},$ which can be written as
${{n}_{{{s}_{j}}}}=\sum\limits_{i=1}^{{{r}_{{{s}_{j}}}}}{\sum\limits_{k=l_{i}^{{{s}_{j}}}}^{t_{i}^{{{s}_{j}}}}{2^k}},$
where   
$0\le l_{1}^{{{s}_{j}}}\le t_{1}^{{{s}_{j}}}\le l_{2}^{{{s}_{j}}}-2<l_{2}^{{{s}_{j}}}\le t_{2}^{{{s}_{j}}}\le ...\le l_{{{r}_{j}}}^{{{s}_{j}}}-2<l_{{{r}_{s_j}}}^{{{s}_{j}}}\le t_{{{r}_{s_j}}}^{{{s}_{j}}},$
we define
\begin{eqnarray}\label{As}
{{A}_{s}}&:=&{\left\{ l_{1}^{s},l_{2}^{s},...,l_{r^1_{s}}^{s} \right\}}
\bigcup{\left\{ t_{1}^{s},t_{2}^{s},...,t_{{{r}_{s}^2}}^{s} \right\}}
={\left\{ u_{1}^{s},u_{2}^{s},...,u_{r^3_{s}}^{s} \right\}},
\end{eqnarray}
where $ u_{1}^{s}<u_{2}^{s}<...<u_{r^3_{s}}^{s}.$
We note that $
t_{{{r}_{s_j}}}^{{{s}_{j}}}=s\in {{A}_{s}}, \ \ \text{ for } \ \  j=1,2,...,r.  
$

We denote the cardinality of the set $A_s$ by $\vert A_s\vert$, that is
$card(A_s):=\vert A_s\vert.$
By this definition we can conclude that $\vert A_s\vert=r_s^3\leq r_s^1+r_s^2. $
It is evident that 
$\sup_{s\in\mathbb{N}}\vert A_s\vert<\infty$
if and only if the sets  $\{{{n}_{{{s}_{1}}}},{{n}_{{{s}_{2}}}}, ...,{{n}_{{{s}_{r}}}}\}$ are uniformly finite for all $s\in \mathbb{N}_+$ and each ${{n}_{{{s}_{j}}}}$ has bounded variation
$
V(n_{s_j})<c<\infty, \  \text{for each}  \  j=1,2,\ldots,r.
$

\begin{conjecture}\label{theorem11}
	a) Let $f\in {{H}_{1}}$ and $\left\{ n_{k}:k\geq 0\right\} $ is a sequence of positive numbers. Then the weighted maximal operator $\widetilde{S }^{\ast ,\nabla },$ defined by
	\begin{equation*}
	S^{\ast ,\nabla }F:=\underset{k\in \mathbb{N}}{\sup }
	\frac{\left\vert S _{n_k}F\right\vert}{A_{\vert n_ k\vert }},
	\end{equation*}
	is bounded from the Hardy space ${{H}_{1}}$ to the Lebesgue space ${{L}_{1}}$.

	b) (Sharpness) Let  
	$
	{{\sup }_{k\in \mathbb{N}}}\vert{{A}_{n_k}}\vert=\infty
	$
	and 
	$\{\varphi_n\}$ is a nondecreasing sequence
	satisfying the condition
	$
	\overline{\lim}_{k\rightarrow \infty }\left({A_{\vert n_k\vert} } /{
		\varphi_{\vert n_k\vert}}\right)=\infty .
	$
	Then there exists a martingale $f\in H_1,$ such that the maximal operator, defined by
	$\underset{k\in \mathbb{N}}{\sup }\frac{\left\vert S _{n_k}F\right\vert}{\varphi_{\vert n_k\vert}}$
	is not bounded from the Hardy space ${{H}_{1}}$ to the Lebesgue space ${{L}_{1}}.$
\end{conjecture}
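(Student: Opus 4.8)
The plan is to prove Theorem \ref{theorem111} in two parts, following closely the atomic method that worked for the weighted operators in \cite{tep0,tep1,BPST}. By Lemma \ref{W1}, since the normalized operators $S_n/V(n)$ are trivially bounded on $L_\infty$ (because $\Vert S_n F\Vert_1\leq cV(n)\Vert F\Vert_1$ and the analogous $L_\infty$ bound follows from the Dirichlet-kernel estimate $\Vert D_n\Vert_1\leq V(n)$), it suffices for part a) to verify the weak-type inequality
\begin{equation*}
t\,\mu\left\{x\in\overline{I_M}:\widetilde{S}^{\ast,\nabla}a(x)\geq t\right\}\leq c<\infty,\qquad t\geq 0,
\end{equation*}
for every $1$-atom $a$ supported on a dyadic interval $I=I_M$ with $\mu(I)=2^{-M}$.

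For part a) I would first reduce to $n\geq 2^M$, using that $S_n a=0$ for $n<2^M$ when $a$ is an atom with vanishing integral on $I_M$. Then, writing $\Vert a\Vert_\infty\leq 2^M$ and $S_n a(x)=\int_{I_M}a(t)D_n(x+t)\,d\mu(t)$, the core step is the pointwise kernel estimate: for $x\in I_s\setminus I_{s+1}$ with $0\leq s<M$ and $t\in I_M$, one has $x+t\in I_s\setminus I_{s+1}$, and combining \eqref{1dn} and \eqref{2dn} gives $\vert D_n(x+t)\vert\leq c2^s$. This yields $\vert S_n a(x)\vert/V(n)\leq c2^M\cdot 2^{s-M}=c2^s$ uniformly in $n$, hence $\widetilde{S}^{\ast,\nabla}a(x)\leq C2^s$ on $I_s\setminus I_{s+1}$. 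The level-set estimate then follows by decomposing $\overline{I_M}$ via \eqref{2}: for a threshold $C2^k$ with $k<M$, the level set is contained in $\bigcup_{s=k}^{M-1}(I_s\setminus I_{s+1})$, whose measure is $\sum_{s=k}^{M-1}2^{-s}\leq 2\cdot 2^{-k}$, giving $2^k\mu\{\,\cdots\,\}\leq c$; for thresholds at level $k\geq M$ the level set is empty. This establishes the weak-$(1,1)$ bound.

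For part b), the goal is to exhibit a family of test martingales witnessing failure of the $H_1\to L_1$ bound. Following \cite{BPST}, I would take $f_{n_k}(x)=D_{2^{n_k+1}}(x)-D_{2^{n_k}}(x)$ for $n_k\geq 3$, for which the normalization gives $\Vert f_{n_k}\Vert_{H_1}\leq 1$, while the partial sums satisfy the lower bound $\vert S_{2^{n_k}+2^s}f_{n_k}(x)\vert\geq c2^s$ for $x\in I_{s+1}(e_s)$ and $s=0,\dots,n_k-1$. The decisive observation is that for the indices $n=2^{n_k}+2^s$ the variation $V(n)$ stays bounded (indeed $V(2^{n_k}+2^s)$ is a small absolute constant since the binary expansion has only two blocks), so dividing the lower bound by $V(n)$ loses nothing. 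Integrating the supremum over the disjoint sets $I_{s+1}(e_s)$ (each of measure $2^{-(s+1)}$) produces
\begin{equation*}
\int_G\sup_{n\in\mathbb{N}}\frac{\vert S_n f_{n_k}\vert}{V(n)}\,d\mu\geq c\sum_{s=0}^{n_k-1}\frac{1}{2^{s}}\cdot 2^s=c\,n_k,
\end{equation*}
whence $\Vert\widetilde{S}^{\ast,\nabla}f_{n_k}\Vert_1/\Vert f_{n_k}\Vert_{H_1}\geq c\,n_k\to\infty$, contradicting boundedness $H_1\to L_1$.

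I expect the main obstacle to be the kernel estimate $\vert D_n(x+t)\vert\leq c2^s$ in part a), specifically making it uniform in $n\geq 2^M$ rather than for a single $n$. The subtlety is that $D_n$ can be large on low-index dyadic shells, so one must carefully track, via the decomposition $D_n=w_n\sum_k n_k(D_{2^{k+1}}-D_{2^{k}})$, that for $x\in I_s\setminus I_{s+1}$ only the terms with index $\leq s$ survive on $I_M$-translates, and that the worst case is controlled by $2^s$ independently of how many binary digits $n$ carries — this is precisely where the factor $V(n)$ in the denominator is doing essential work, and where one must be most careful to obtain a bound with an absolute constant. The second delicate point, in part b), is verifying that $V(2^{n_k}+2^s)$ remains uniformly bounded across $s$, which is what allows the $V(n)$-normalization to be harmless and thereby forces the logarithmic-type growth $n_k$ in the $L_1$ norm.
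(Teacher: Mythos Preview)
Your proposal addresses the wrong statement. You write ``The plan is to prove Theorem \ref{theorem111}'' and then carry out exactly the paper's proof of that theorem: the weak-$(1,1)$ atomic estimate for the $V(n)$-weighted maximal operator in part a), and the counterexample $f_{n_k}=D_{2^{n_k+1}}-D_{2^{n_k}}$ for part b). As a proof of Theorem~\ref{theorem111} this is fine and essentially identical to the paper. But the statement you were asked about is Conjecture~\ref{theorem11}, which is a \emph{different} assertion with a different weight and a different target space.

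Concretely: Conjecture~\ref{theorem11}a) claims boundedness of $\sup_k |S_{n_k}F|/|A_{|n_k|}|$ from $H_1$ into the \emph{strong} space $L_1$, where $|A_s|$ is the cardinality of the set $A_s$ built in \eqref{As} from the block structure of \emph{all} the $n_{s_j}$ lying in $[2^s,2^{s+1})$. Your argument instead proves a \emph{weak}-$L_1$ bound for the weight $V(n_k)$, which depends only on the single index $n_k$. Neither the weight nor the conclusion matches: the atomic criterion you invoke yields only weak-type, and your kernel estimate $|D_n(x+t)|\le c2^s$ makes no use of the collective quantity $|A_s|$. Likewise, your part b) reproduces Theorem~\ref{theorem111}b) (unboundedness in $L_1$ for the $V(n)$ weight), whereas Conjecture~\ref{theorem11}b) is a sharpness claim for weights $\varphi_{|n_k|}$ dominated by $|A_{|n_k|}|$.

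Finally, note that Conjecture~\ref{theorem11} is stated in the paper as an open problem; there is no proof in the paper to compare against. Any genuine attempt would have to go beyond the weak-type atomic machinery you used and exploit the definition of $A_s$ to obtain an honest $L_1$ estimate on $\overline{I_M}$---something the paper does not do.
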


\end{document}